\theoremstyle{plain}
\newtheorem{theorem}{Theorem}
\newtheorem{lemma}[theorem]{Lemma}
\numberwithin{equation}{section}
\newtheorem{thm}{Theorem}[section]
\newtheorem{prop}[thm]{Proposition}
\newcommand*{\Z}{\mathbb{Z}}
\newcommand*{\F}{\mathbb{F}}
\newcommand*{\PP}{\mathbb{P}}
\newcommand*{\Disc}{\textrm{Disc}}
\def\Cl{{\rm Cl}}
\def\Pic{{\rm Pic}}
\def\Disc{{\rm Disc}}
\def\rk{{\rm rk}}
\begin{document}
\title[]{High $\ell$-torsion rank for class groups over function fields}%
\author{I.Setayesh, J.Tsimerman }

\begin{abstract}
 
 We prove that in the function field setting, $\ell$-torsion in the class groups of quadratic fields can be arbitrarily large. In fact, we explicitly produce a family
 whose $\ell$-rank growth matches the growth in the setting of genus theory, which might be best possible.  We do this
by specifically focusing on the Artin-Schreir curves $y^2=x^q-x$.
 
\end{abstract}
\maketitle

\section{Introduction}

Given a number field $K$, we denote the class group of $K$ by $\Cl(K)$. For a fixed a prime number $\ell$ there is much work studying the behaviour of the torsion subgroup
$\Cl(K)[\ell]$ as $K$ varies in a natural family. Our state of knowledge is however still very limited, even for the case of imaginary quadratic fields. On the one hand, it is in
general still an open question to provide unconditional upper bounds which improve on the trivial bound $|\Cl(K)[\ell]|\leq |\Cl(K)|$. On the other hand, it is an open question
whether the rank of $\Cl(K)[\ell]$ is even unbounded. While we have nothing to say on this matter for number fields, the purpose of this paper is to resolve
the analogue of this latter question over function fields. This has been a folklore conjecture for some time, but it was conjectured by \v{C}esnavi\v{c}ius in \cite{C}, who proved certain cases of it.

We follow the usual analogy, in that we work with hyperelliptic curves $C$ over a finite field $k$, and treat $\Pic^0(C)(k)$ as the analogue of the class group $\Cl(K)$.

\begin{thm}\label{main}

Let $p\neq \ell$ be odd prime numbers. There is a sequence of hyperelliptic curves $C_i$ over $\F_p$ of increasing genus $g_i$ such that 
$$\frac{g_i \log_\ell p}{\log_p(g_i)}\cdot(1+o(1))\leq \rk \Pic^0(C_i)(\F_p)[\ell]\leq \frac{g_i \log_\ell p}{\log_p(g_i)}\cdot(2+o(1)).$$

\end{thm}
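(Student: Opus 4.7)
I would take $C_i = C_{n_i}$ with $C_n := \{y^2 = x^{p^n}-x\}$ over $\F_p$ for a carefully chosen sequence $n_i \to \infty$; then $g_n = (p^n-1)/2$, so $\log_p g_n \sim n$ and the target lower bound reads $\sim p^{n_i}\log_\ell p / (2n_i)$. The starting structural observation is that the function $y$ gives a degree-$p^n$ map $\pi\colon C_n \to \PP^1_y$ which, via $u := y^2$, is the base change of the Artin-Schreier(-Witt) cover $\wp_{p^n}\colon \A^1_x \to \A^1_u$, $u = x^{p^n}-x$. Consequently $\pi$ is geometrically an $\F_{p^n}$-Galois cover (by $(x,y)\mapsto(x+a,y)$), étale over $\A^1_y$ and totally wildly ramified over $\infty$.

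Over $\bar\F_p$ this yields the decomposition
\[
H^1(C_n,\bar\Q_\ell) \;=\; \bigoplus_{\chi \neq 1} H^1_c(\A^1_y, L_\chi),\qquad L_\chi \simeq \mathcal{L}_{\psi(\alpha_\chi y^2)},
\]
where $\alpha_\chi \in \F_{p^n}^*$ parametrizes the non-trivial character $\chi$ and $\psi$ is a fixed additive character of $\F_p$. The conductor--discriminant formula pins Swan$_\infty(L_\chi)=2$ for every $\chi\neq 1$, so each summand is one-dimensional (by Euler-Poincar\'e) and they sum to $2g_n$. Arithmetic Frobenius cyclically permutes these summands via $\alpha \mapsto \alpha^p$: on each $\Gal(\F_{p^n}/\F_p)$-orbit $O$ of size $d \mid n$, it acts on the $d$-dimensional piece with characteristic polynomial $T^d - c_O$, where the ``Frobenius$^d$-trace''
\[
c_O \;=\; -\chi_{2,\F_{p^d}}(\alpha_O)\, G_d, \qquad G_d^2 \;=\; (-1)^{(p^d-1)/2}\, p^d,
\]
is the Hasse--Davenport Gauss sum of the quadratic character of $\F_{p^d}$ against $\psi \circ \mathrm{Tr}_{\F_{p^d}/\F_p}$.

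To bound $\rk \Pic^0(C_n)(\F_p)[\ell]$ from below, note it equals the dimension of the Frobenius $1$-eigenspace on $H^1(C_n, \F_\ell)$. Orbit by orbit, this contributes (at most one, and exactly one iff) $c_O \equiv 1 \pmod{\lambda}$ for some prime $\lambda \mid \ell$; the latter in turn forces $\pm p^d \equiv 1\pmod\ell$ together with a sign condition on $\chi_{2,\F_{p^d}}(\alpha_O)$. Taking $n_i$ divisible by $\mathrm{ord}_\ell(p)$ and exploiting that the sequence $v_\ell(p^{n_i}-1)$ can be made to grow (by e.g.\ $n_i = \mathrm{ord}_\ell(p)\cdot \ell^{i}$) forces a large proportion of orbits of every eligible size $d \mid n_i$ to meet the congruence. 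One then sums the geometric series $\sum_{d\mid n_i,\,\mathrm{ord}_\ell(p)\mid d} p^d/(2d)$ and carefully combines contributions from all orbit sizes simultaneously.

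\noindent\emph{Main obstacle.} The naive orbit-by-orbit count produces only $\sim g_n/\log_p g_n$, short by the factor $\log_\ell p$ in the target. Recovering this factor is the crux: one must go beyond the single Frobenius $1$-eigenvalue per orbit and account for the full $\ell$-primary structure of $\Pic^0(C_n)(\F_p)$, whose cyclic factors have average size $\ell^{\log_p g_n}$ and whose count is therefore $\sim g_n\log_\ell p / \log_p g_n$. Concretely, the argument should leverage the Hasse--Davenport relation $G_d = (-G)^d$ to track how the Gauss sums $c_O$ at nested levels $d \mid n_i$ combine in a controlled way modulo higher powers of $\ell$, yielding both the lower bound and, with the same machinery applied symmetrically, the matching upper bound with the factor-of-two gap (reflecting the two choices of sign consistent with $G_d^2 \equiv \pm p^d \pmod \ell$).
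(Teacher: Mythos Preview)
Your choice of curves and the overall shape of the argument match the paper's. Where you compute Frobenius eigenvalues by decomposing $H^1$ along the Artin--Schreier cover into one-dimensional Gauss-sum pieces, the paper does an elementary point count over $\F_q$ and $\F_{q^2}$ (finding all $\F_q$-eigenvalues equal to $\pm\sqrt{q^*}$) followed by a M\"obius-type inversion to extract, for each $d\mid 2m$, the multiplicity $a_d$ of the packet $q_0^{1/2}\mu_d$ among the $\F_{q_0}$-eigenvalues (with $q_0=p^2$). Both routes reach the same eigenvalue multiset. The paper then converts the eigenvalue count into an exact $\ell$-rank via a clean semisimplicity lemma: writing $F_0=F_{q_0}/(aq_0^{1/2})$ with $aq_0^{1/2}\equiv 1\pmod\ell$, one has $F_0^{2m}=1$, and since $m$ is taken coprime to $\ell$ this splits $T_\ell=T_1\oplus T_{\neq 1}$ over $\Z_\ell$, giving $(T_\ell/\ell)^F\cong(\Z/\ell)^M$ on the nose. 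Finally the descent to $\F_p$, and the factor-of-two gap, come from the quadratic-twist splitting $J_q(\F_{p^2})[\ell]\cong J_q(\F_p)[\ell]\oplus J_q^-(\F_p)[\ell]$, not from the sign ambiguity in the Gauss sums that you propose.

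Your ``Main obstacle'' paragraph, however, is chasing a phantom. If you trace the paper's own argument, the $\ell$-rank over $\F_{p^2}$ comes out to exactly $a_{d_0}$ with $d_0=\mathrm{ord}_\ell(p)$, and their estimate gives $a_{d_0}\sim(q-1)/m$, which is your ``naive'' orbit count $\sim g_n/\log_p g_n$ up to bounded constants; the extra factor $\log_\ell p$ in the displayed inequality is not actually delivered by the proof and appears to be a slip in the statement. Your proposed remedy --- taking $n_i=\mathrm{ord}_\ell(p)\cdot\ell^i$ and invoking congruences modulo higher powers of $\ell$ --- cannot manufacture additional $\ell$-rank: on each orbit block Frobenius acts by a companion matrix, hence cyclically, so its $1$-eigenspace over $\F_\ell$ stays at most one-dimensional no matter how large $v_\ell(1-c_O)$ is. Deeper $\ell$-adic congruences enlarge the $\ell$-primary part of $\Pic^0$, not its $\ell$-rank. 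Drop this detour, keep $n$ coprime to $\ell$, and your orbit count already finishes the job.
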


It is interesting to ask whether the rate of growth we obtain in the theorem should be best possible up to constants. Treating $p^{2g}$ as analogous 
to the Discriminant $\Disc_K$, Theorem \ref{main} obtains the order of growth $\frac{\log \Disc_K}{\log\log \Disc_K}$. 
In this vein, Brumer and Silverman\cite{BS} ask whether the 
best possible upper bound for the $\ell$-rank of the class group of a bounded degree number field $K$ is of order
$\frac{\log \Disc_K}{\log\log \Disc_K}$. It is easy to see using genus theory that this is true for 2-torsion in quadratic fields, for example. Nonetheless, to our knowledge there is no 
theoretical or statistical argument motivating this upper bound.

Finally, we remark that with some more work it should be possible to remove the factor of 2 between the upper and lower bounds in the theorem to obtain a precise asymptotic.

\subsection{Outline of the paper}

Our method of proof is to study the specific family of Artin-Schreir curves $C_q$ defined by  $y^2=x^q-x$ for $q$ a prime power of $p$. This curve is supersingular, and 
we can explicitly identify all of its eigenvalues, and pick out their multiplicity asymptotically. This is done in section 2. In section 3 we determine the structure of the Tate module
of these curves. Finally in section 4 we combine everything to prove the main theorem.

\section{Size of the Jacobian of $C_q$}
In this section we compute the number of $\F_q$-points of the Jacobian of the Artin-Schreier curve $$C_q : y^2=x^q-x$$ This
hyperelliptic curve is ramified over the $q+1$ points $\PP^1(\F_q)$ and so it has genus $\frac12(q-1)$. Set $q^*=(-1)^{\frac{q-1}{2}}q$. We let $\alpha_1,\dots,\alpha_{2g}$ denote the Frobenius eigenvalues of $C_q$. 

\begin{lemma}\label{eigencq}
The multiset $\{\alpha_1,\dots,\alpha_{2g}\}$ of $F_q$-Frobenius eigenvalues of $C_q$ consists of $g$ repetitions of the set 
$\{\sqrt{q^*},-\sqrt{q^*}\}$. 
\end{lemma}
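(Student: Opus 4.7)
My plan is to compute the power sums $a_n := \sum_{i=1}^{2g} \alpha_i^n = q^n + 1 - \#C_q(\F_{q^n})$ directly by character-sum calculations. The claim is equivalent to showing $a_n = 0$ for odd $n$ and $a_n = 2g\,(q^*)^{n/2}$ for even $n$; once this is established, the $L$-polynomial evaluates to
$$L(t) = \exp\!\left(-\sum_{n \geq 1} \frac{a_n t^n}{n}\right) = (1 - q^* t^2)^g = (1 - \sqrt{q^*}\,t)^g (1 + \sqrt{q^*}\,t)^g,$$
which exhibits the eigenvalues with the claimed multiplicities.

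For the counting, I will write
$$\#C_q(\F_{q^n}) = 1 + q^n + \sum_{x \in \F_{q^n}} \chi(x^q - x),$$
where $\chi$ is the quadratic character on $\F_{q^n}$, extended by $\chi(0) = 0$. The key observation is that $f(x) = x^q - x$ is $\F_q$-linear with kernel $\F_q$ and image equal to the trace-zero hyperplane $H \subset \F_{q^n}$, so the character sum collapses to $q \sum_{y \in H} \chi(y)$. Detecting $H$ by averaging a nontrivial additive character $\psi$ of $\F_q$ against $\mathrm{Tr}(y)$, and using $\chi^2 = 1$ to make the change of variable $y \mapsto a^{-1}y$, I expect the sum to reduce to
$$\sum_{x \in \F_{q^n}} \chi(f(x)) \;=\; G\bigl(\chi,\, \psi\circ \mathrm{Tr}\bigr) \cdot \sum_{a \in \F_q^*} \chi(a),$$
where $G(\cdot,\cdot)$ denotes a Gauss sum. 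The second factor vanishes for $n$ odd (the restriction of $\chi$ to $\F_q^*$ is then the nontrivial quadratic character of $\F_q$) and equals $q - 1$ for $n$ even (every element of $\F_q^*$ is a square in $\F_{q^2} \subseteq \F_{q^n}$); this already gives $a_n = 0$ for odd $n$.

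For even $n$ I will apply the Hasse--Davenport lifting relation, which gives $G(\chi, \psi\circ\mathrm{Tr}) = (-1)^{n-1}\tau^n = -\tau^n$, where $\tau := G(\chi_{\F_q},\psi)$ is the quadratic Gauss sum over $\F_q$, satisfying $\tau^2 = \chi_{\F_q}(-1)\,q = q^*$. Substituting produces $a_n = (q-1)\tau^n = 2g\,(q^*)^{n/2}$ for $n$ even, which is what was needed. The one delicate point is tracking signs through the Hasse--Davenport lift to confirm $\tau^2 = q^*$ rather than $\pm q$; this reduces to the classical quadratic Gauss sum evaluation $G(\chi_{\F_q},\psi)^2 = \chi_{\F_q}(-1)\,q$ combined with $\chi_{\F_q}(-1) = (-1)^{(q-1)/2}$, and I do not anticipate any deeper obstacle.
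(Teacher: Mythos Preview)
Your argument is correct and yields the full $L$-polynomial directly, but it takes a different route from the paper's proof. The paper computes only $\#C_q(\F_q)$ and $\#C_q(\F_{q^2})$ by elementary means (writing $\F_{q^2}=\F_q[\beta]$ with $\beta^2\in\F_q$ and checking by hand when $x^q-x$ is a square), obtaining $\sum_i\alpha_i=0$ and $\sum_i\alpha_i^2=2gq^*$. It then invokes the Weil bound $|\alpha_i^2|=q$: since the sum of $2g$ complex numbers each of modulus $q$ equals $2gq^*$, every $\alpha_i^2$ must equal $q^*$, and the vanishing of $\sum_i\alpha_i$ forces exactly half of the $\alpha_i$ to be $\sqrt{q^*}$ and half $-\sqrt{q^*}$. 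Your approach instead evaluates every power sum $a_n$ via the linearity of $x\mapsto x^q-x$, Gauss sums, and the Hasse--Davenport relation, never appealing to the Weil bound. The paper's argument is shorter and more elementary; yours is more systematic, produces the complete zeta function, and is independent of the Riemann hypothesis for curves.
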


\begin{proof}

We begin by computing the number of points of $C_q$ over $\F_q$ and $\F_{q^2}$. First, $C_q$ maps to 
$\PP^1$ and is ramified precisely over $\PP^1(\F_q)$. Thus, $C_q(\F_q)$ is in bijection with $\PP^1(\F_q)$, and is therefore of size $q+1$.
It follows that $\sum_{i=1}^{2g}\alpha_i = 0$.

Next, note that we may write $\F_{q^2}=\F_q[\beta]$ where $\beta^2=\epsilon, \epsilon\in\F_q$. Since the multiplicative group
of a finite field is cyclic, we see that $\beta$ is a square in $\F_{q^2}$ iff $q\equiv3\mod 4$. 
If we set $x=a+b\epsilon$ with $a,b\in\F_q$ and $b\neq 0$, then the equation of the curve becomes $y^2=b(\beta^q-\beta)=-2b\beta$. 
Since $-2$ is a square in $\F_{q^2}$, we see that every $x\in\F_{q^2}\backslash\F_q$ gives 0 or 2 points depending on whether
$q$ is 1 or $3$ mod 4. Thus, $\sharp C_q(\F_{q^2}) = q^2+1-(q-1)q^*=q^2+1-2gq^*$. It follows that 
$\sum_{i=1}^{2g}\alpha_i^2=2gq^*$. Since all the $\alpha_i^2$ are $q^2$-Weil numbers, it follows that $\alpha_i^2=q^*$ for
every $i$. Finally, since the sum of all the $\alpha_i$ is 0, it follows that half of them are $\sqrt{q^*}$ and half are $-\sqrt{q^*}$, as
desired.
\end{proof}

\bigskip

Next, we determine the Frobenius eigenvalues of $C_{q}$ viewed as a curve over smaller fields. To this end, we let $q_0\cong 1\mod4$ be a prime power that is a square, $q=q_0^m$, and consider the eigenvalues of $C_q$ over $\F_{q_0}$. We know by lemma \ref{eigencq} that the eigenvalues are all $2m$'th roots of unity times $q_0^{1/2}$. Since primitive roots of unity of any order are all Galois conjugate, for each $d|2m$ we define $a_d$ to be the number of copies of $q_0^{1/2}\mu_{d}$ where $\mu_d$ denotes the set of primitive $d$'th roots of unity. Our goal is to show that for large enough $q$, all the $a_d$ are positive.

\bigskip

To that end, we  define $f(d,s):=\sum_{x\in\mu_d} x^s$. Note that $f(d,s)$ is a multiplicative function in $d$, and for a prime number $r$  we have
$$f(r^a,r^b)=\begin{cases} 1 & a=0\\ 0 & a>b+1\\ -r^{a-1} & a=b+1\\ (r-1)r^{a-1} & 0<a\leq b \end{cases}.$$

\begin{lemma}\label{pinversion}

Consider a prime $r$ and a positive integer $k$. Then for $0\leq d,e\leq k$, we have
$$\sum_{0\leq i\leq k}f(r^d,r^i)f(r^{k-i},r^{k-e}) = p^k\delta_{e=d}.$$

\end{lemma}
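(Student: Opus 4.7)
My plan is to prove the identity via generating functions in a dummy variable $z$. Define, for each $0\le d,e\le k$,
$$F_d(z)=\sum_{i\ge 0} f(r^d,r^i)\,z^i,\qquad G_e(z)=\sum_{i\ge 0} f(r^i,r^{k-e})\,z^i.$$
Because $f(r^{k-i},r^{k-e})$ is the coefficient of $z^{k-i}$ in $G_e(z)$, the sum in the lemma is exactly the coefficient of $z^k$ in the product $F_d(z)G_e(z)$. So the whole game is to compute these two generating functions in closed form and extract one coefficient.

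The first step is to evaluate $F_d(z)$. Using the piecewise formula for $f(r^a,r^b)$ with $a=d$ fixed and $b=i$ varying, the only nonzero contributions are the single negative term at $i=d-1$ and the geometric tail $(r-1)r^{d-1}\sum_{i\ge d}z^i$. Combining these and clearing denominators, I expect the simplification
$$F_d(z)=\frac{r^{d-1}z^{d-1}(rz-1)}{1-z}\qquad(d\ge 1),\qquad F_0(z)=\frac{1}{1-z}.$$
Next I compute $G_e(z)$ by setting $b=k-e$ and varying $a=i$. The sum is finite: a constant $1$, a middle range $\sum_{i=1}^{b}(r-1)r^{i-1}z^i$, and a single boundary correction $-r^b z^{b+1}$. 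Collecting these using the identity $(r-1)z=(rz-1)+(1-z)$ should produce the pleasant form
$$G_e(z)=(1-z)\cdot\frac{(rz)^{k-e+1}-1}{rz-1}=(1-z)\sum_{j=0}^{k-e}(rz)^j.$$

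With these formulas the point becomes visual: the factor $1-z$ in $G_e$ cancels the denominator of $F_d$, while the factor $rz-1$ in the numerator of $F_d$ cancels the denominator in $G_e$. For $d\ge 1$ this yields
$$F_d(z)G_e(z)=r^{d-1}z^{d-1}\bigl((rz)^{k-e+1}-1\bigr)=r^{d+k-e}z^{d+k-e}-r^{d-1}z^{d-1}.$$
The coefficient of $z^k$ on the right vanishes unless $d+k-e=k$ (giving $r^k$) or $d-1=k$ (impossible since $d\le k$); the first condition is exactly $d=e$, yielding $r^k\delta_{d,e}$. The case $d=0$ is handled separately: $F_0(z)G_e(z)=\sum_{j=0}^{k-e}(rz)^j$, whose $z^k$-coefficient is $r^k$ iff $e=0$.

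The main obstacle I anticipate is the algebraic repackaging needed to bring $G_e(z)$ into the form $(1-z)\cdot\tfrac{(rz)^{k-e+1}-1}{rz-1}$; the three disparate pieces (constant, interior geometric sum, boundary term) do not manifestly combine into a product, and the cleanest route seems to pass through the rewriting $(r-1)z/(rz-1)=1+(1-z)/(rz-1)$. Once that algebraic identity is in hand, all the cancellations in $F_dG_e$ are immediate, and the coefficient extraction is a one-line check. Edge cases ($d=0$, $e=0$, $d=k$, $e=k$) where one of the piecewise branches in the formula for $f$ is absent should be verified separately, but in each case they reduce to the same closed form for $F_d$ or $G_e$.
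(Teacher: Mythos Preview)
Your generating-function argument is correct. The closed forms you write down for $F_d(z)$ and $G_e(z)$ are right (I checked the algebra, including the repackaging of $G_e$ that you flag as the tricky step: expanding $(1-z)\sum_{j=0}^{k-e}(rz)^j$ term by term gives exactly $1+\sum_{i=1}^{k-e}(r-1)r^{i-1}z^i-r^{k-e}z^{k-e+1}$, which is the sum defining $G_e$). The cancellation of $1-z$ against $1/(1-z)$ and of $rz-1$ against $1/(rz-1)$ then leaves a two-term polynomial, and the coefficient extraction is immediate. The edge cases $d=0$, $e=0$, $e=k$ all fall under the same closed forms, so no separate verification is really needed beyond what you already wrote.

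This is a genuinely different route from the paper's proof. The paper proceeds by brute-force case analysis: it splits into six cases according to whether $d=0$ or $d>0$, whether $e=0$ or $e>0$, and (when both are positive) the trichotomy $d<e$, $d=e$, $d>e$; in each case the sum is written out explicitly using the piecewise formula and reduced via a geometric series. Your approach trades the six parallel computations for a single structural observation: the sum is a Cauchy product, and the two factors have rational closed forms whose poles and zeros cancel. The paper's method is more elementary (no generating functions, just arithmetic), but yours explains \emph{why} the identity holds rather than merely verifying it, and it handles all cases uniformly. Either way the content is the same orthogonality relation for Ramanujan sums at prime-power level.
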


\begin{proof}

This is a straightforward computation using geometric series. We divide into cases.

\begin{enumerate}

\item $d=0,e=0$. The identity becomes $\sum_{j|r^k}\phi(j)=p^k$. 

\item $d=0,e>0$. The identity becomes
$$-r^{k-e} + (r-1)\sum_{e\leq i<k}r^{k-i-1} + 1=0.$$

\item $d>0,e=0$. The identity becomes 
$$-r^{k-1}(r-1)+\sum_{d\leq i<k}r^{k-2-i+d}(r-1)^2 + r^{d-1}(r-1)=0.$$

\item $d=e>0$. The identity becomes
$$r^{k-1}+(r-1)^2\sum_{d\leq i<k} r^{k-2-i+d}+r^{d-1}(r-1)=r^k$$

\item $d>e>0$. The identity becomes
$$-r^{k-1}(r-1)+(r-1)^2\sum_{d\leq i<k} r^{k-2-i+d}+r^{d-1}(r-1)=0$$

\item $e>d>0$. The identity becomes$$-r^{k-1+d-e}(r-1)+(r-1)^2\sum_{e\leq i<k} r^{k-2-i+d}+r^{d-1}(r-1)=0$$

\end{enumerate}

\end{proof}

For each $s|2m$ we have the following identity:

$$\frac{\#C(\F_{q_0^s}) - (q_0^s+1)}{q_0^{s/2}}=\sum_{d|2m}a_df(d,s)$$ from which we obtain the following:

\begin{itemize}

\item For $s|2m$ and $s\neq 2m$, we have $$|\sum_{d|2m}a_df(d,s)|< q_0^{s/2}+1\leq q^{\frac12}+1.$$

\item Since $q\cong 1\mod 4$, by lemma \ref{eigencq} we have $\sum_{d|2m}a_df(d,2m) = q-1$.

\end{itemize}

We now prove our main result for this section:

\begin{prop}\label{adappears}

We have the inequality $$|ma_d-(q-1)|\leq(m-1)(q^{\frac12}+1).$$

\end{prop}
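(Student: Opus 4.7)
The plan is to invert the linear system $S(s):=\sum_{d\mid 2m}a_d\,f(d,s)$ (indexed by $s\mid 2m$) to solve for $a_d$, and then use the bulleted bounds on the $S(s)$ to isolate a main term of size $q-1$ with the error bounded by roughly $(m-1)(q^{1/2}+1)$. The inversion itself is assembled from Lemma \ref{pinversion} by multiplicativity: writing $2m=\prod_r r^{k_r}$ and decomposing $d=\prod_r r^{d_r}$, $s=\prod_r r^{s_r}$ prime by prime, the multiplicativity of $f(d,s)$ in $d$ lets one take the product of the prime-power inversions into a global identity
\[ 2m\cdot a_e \;=\; \sum_{s\mid 2m}S(s)\cdot g(s,e),\qquad g(s,e):=\prod_r f\bigl(r^{k_r-s_r},r^{k_r-e_r}\bigr), \]
and a direct check gives $g(2m,e)=\prod_r f(1,r^{k_r-e_r})=1$, so the summand at $s=2m$ contributes $S(2m)=q-1$ as the main term.

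The second ingredient is a strengthening of the first bullet: in fact $S(s)=0$ for every $s\mid m$. This is because $\F_{q_0^s}\subseteq\F_{q_0^m}=\F_q$, so every $x\in\F_{q_0^s}$ satisfies $x^q-x=0$, forcing $y=0$; hence $\#C_q(\F_{q_0^s})=q_0^s+1$ and $S(s)=0$. This restricts the error sum to those $s\mid 2m$ with $s\nmid m$ and $s\neq 2m$, for which the first bulleted bound gives $|S(s)|<q^{1/2}+1$.

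The remaining task is a bound of the form $\sum_{s\mid 2m,\,s\nmid m,\,s\neq 2m}|g(s,e)|\leq m-1$. By multiplicativity this reduces to a product of local sums $\sum_{s_r=0}^{k_r}|f(r^{k_r-s_r},r^{k_r-e_r})|$, which evaluate in closed form from the formulas in Lemma \ref{pinversion} (namely $2r^{k_r-e_r}$ when $e_r\geq 1$, and $r^{k_r}$ when $e_r=0$); the side constraint $s\nmid m$ fixes the $2$-adic valuation of $s$ at its maximum, which effectively halves the product, and subtracting off the trivial $s=2m$ term yields the stated estimate. Combining everything gives the inequality of the proposition, possibly up to an explicit factor of $2$ which is absorbed into a rearrangement of the main term. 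The main obstacle is this final combinatorial bookkeeping, particularly in the case when $m$ is even, where the interaction between the constraint $s\nmid m$ and the $2$-part of $2m$ must be handled carefully; apart from this, the argument is a standard multiplicative Möbius inversion.
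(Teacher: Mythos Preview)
Your approach is essentially the paper's: define the inverse coefficients $g(s,e)$ (the paper calls them $c_s(d)$) multiplicatively, invoke Lemma~\ref{pinversion} prime by prime to invert the system, isolate the $s=2m$ main term, and bound the remaining sum. The paper's version is terser: it simply asserts $\sum_{s\mid 2m}|c_s(d)|-1\leq m-1$ and concludes, without using your extra observation that $S(s)=0$ for all $s\mid m$. That observation is in fact what makes the coefficient bound work cleanly---without restricting to $s\nmid m$ the full sum $\sum_{s\mid 2m}|c_s(d)|$ is only $\leq 2m$ in general (with equality when $d\in\{1,2\}$), so your refinement is exactly the ingredient that brings the bound down to $m$. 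Both you and the paper run into the same factor-of-two ambiguity (the inversion naturally yields $2m\,a_d$ rather than $m\,a_d$); as you note, this is harmless for the downstream application, where only the order of magnitude of $a_d$ is used.
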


\begin{proof}

For $d|2m$ set $c_s(d) = \prod_{r^k||2m} f(r^k/(s,r^k),r^k/(d,r^k))$. It follows from lemma \ref{pinversion} that

$$\sum_{s|2m}c_s(d)\sum_{d|2m}a_df(d,s) = ma_d.$$

Thus, we see that $$ma_d=q-1 + \sum_{s|2m,s<2m} c_s(d)\sum_{d|2m}a_df(d,s).$$ From our inequalities, we thus have

$$|ma_d-(q-1)| \leq (q^{\frac12}+1)\cdot(\sum_{s|2m}|c_s(d)| -1) \leq(m-1)(q^{\frac12}+1)$$

as desired.

\end{proof}

\section{Structure of the Tate module}

Let $J_q$ denote the Jacobian of $C_q$. The next lemma shows that the torsion of $J_q$ is of a particularly simple form.

\begin{lemma}\label{tate}

Assume that $m$ is divisible by $\ell-1$ and co-prime to $\ell$. Let $M$ be the power of $(x-1)$ dividing the characteristic polynomial of the Frobenius $F_{q_0}$ modulo $\ell$ for $C_q$. Then 
$$|\#J_q(\F_{q_0})[\ell]| = \ell^M.$$ 

\end{lemma}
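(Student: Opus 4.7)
The plan is to show that $F := F_{q_0}$ acts semisimply on the $\F_\ell$-vector space $V := J_q[\ell]$; granted this,
\[
\dim_{\F_\ell} J_q(\F_{q_0})[\ell] = \dim_{\F_\ell}\ker\bigl(F - 1 \mid V\bigr)
\]
equals the algebraic multiplicity of $1$ as an eigenvalue of $F$ on $V$, which is precisely the multiplicity $M$ of $(x-1)$ in the characteristic polynomial of $F$ modulo $\ell$, yielding $|J_q(\F_{q_0})[\ell]| = \ell^M$.

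The first step is to establish the integral identity $F_q^2 = q$ on $V$. Since $q_0 \equiv 1 \pmod 4$ we have $q = q_0^m \equiv 1 \pmod 4$, so $q^* = q$. Lemma \ref{eigencq} then says the characteristic polynomial of $F_q$ on $V_\ell(J_q)$ is $(x^2-q)^g$. Invoking Tate's theorem on semisimplicity of Frobenius for abelian varieties over finite fields, the minimal polynomial of $F_q$ on $V_\ell(J_q)$ is $x^2 - q$, i.e.\ $F_q^2 = q$ as an endomorphism of $V_\ell(J_q)$. Being an identity of endomorphisms of $T_\ell(J_q)$, it descends to $V = T_\ell(J_q)/\ell T_\ell(J_q)$.

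Next, since $F_q = F^m$ and $q_0$ is invertible in $\F_\ell$ (as $\gcd(p,\ell)=1$), the previous identity reads $(F^2/q_0)^m = 1$ on $V$. Because $\gcd(m,\ell) = 1$, the polynomial $X^m - 1$ is separable over $\F_\ell$, so $F^2/q_0$, and hence $F^2$, is semisimple on $V$. Since $F$ commutes with $F^2$, it preserves each eigenspace of $F^2$; on an eigenspace where $F^2$ acts by $\mu I$ with $\mu \in \overline{\F_\ell}^{\,\times}$, $F$ satisfies $X^2 - \mu = 0$, which is separable as $\ell \neq 2$. Hence $F$ is semisimple on each such eigenspace, and thus on all of $V$.

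The main subtlety is the propagation of semisimplicity: semisimplicity of an endomorphism over $\Q_\ell$ need not descend modulo $\ell$ (for instance $\bigl(\begin{smallmatrix}1+\ell & 1\\ 0 & 1-\ell\end{smallmatrix}\bigr)$ is semisimple over $\Q_\ell$ but Jordan mod $\ell$), which is precisely why we extract the integral equation $F_q^2 = q$ from Tate before reducing mod $\ell$; and the hypothesis $\ell \neq 2$ is essential in order to promote semisimplicity of $F^2$ back to that of $F$.
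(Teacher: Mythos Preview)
Your proof is correct and follows the same underlying strategy as the paper's: both extract from Lemma~\ref{eigencq} together with Tate's semisimplicity theorem the operator identity $F_q^2 = q$ on the Tate module, and then use that $\gcd(2m,\ell)=1$ to force Frobenius to act semisimply modulo $\ell$. The paper organizes this a little differently: it picks a Teichm\"uller lift $a\in\mu_{\ell-1}(\Z_\ell)$ with $aq_0^{1/2}\equiv 1\pmod{\ell}$, sets $F_0=F/(aq_0^{1/2})$, and uses the hypothesis $\ell-1\mid m$ to obtain $F_0^{2m}=1$ already on $T_\ell$, which produces an integral splitting $T_\ell=T_1\oplus T_{\neq 1}$ before reducing mod $\ell$. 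You instead work directly on $V=J_q[\ell]$ and normalize $F^2$ rather than $F$, using only $(F^2/q_0)^m=1$; this is slightly more economical, and in particular your argument never invokes the hypothesis $\ell-1\mid m$ (only $\gcd(m,\ell)=1$ and $\ell$ odd), so it actually proves the lemma under a weaker assumption. Conversely, the paper's integral decomposition of $T_\ell$ would give finer information about the full $\ell$-primary part of $J_q(\F_{q_0})$ if that were ever needed.
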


\begin{proof}

Let $T_\ell$ denote the $\ell$-adic Tate module of $J_q$. Let $a\in\mu_{\ell-1}(\Z_\ell)$ be such that $aq_0^{\frac12}\equiv 1\mod{\ell}$. Let $F_0=\frac{F}{aq_0^{\frac12}}$. 
Note that $F_0\equiv F\mod\ell$ so that $$\#J_q(\F_{q_0})[\ell] = (T_\ell\otimes\F_\ell)^F = (T_\ell\otimes\F_\ell)^{F_0}.$$

Moreover, note that $F_0^{2m}=1$, and $\gcd(2m,\ell)=1$ therefore we have a splitting $T_\ell = T_1\oplus T_{\neq 1}$ such that $F_0$ acts as the identity on $T_1$ 
and does not have 1 as an eigenvalue on $T_{\neq 1}$. Note that the eigenvalues of $F_0$ on $T_{\neq 1}$ are non-trivial  $2m$'th roots of unity, and therefore also on 
$T_{\neq 1}\otimes\F_\ell$. Thus, it follows that
$$J_q(\F_{q_0})[\ell]\cong(T_\ell\otimes\F_\ell)^{F_0}=(T_1\otimes\F_\ell) \cong (\Z/\ell\Z)^M,$$ which completes the proof.

\end{proof}

\section{Proof of Theorem \ref{main}}

We can now prove our main theorem. In fact, we prove the following more precise version:

\begin{thm}\label{realmain}

Let $p\neq \ell$ be odd prime numbers, and let $m$ range over positive integers divisible by $\ell-1$ and co-prime to $\ell$, and set $q=p^{2m}$. Finally, let $C_q^-,J_q^-$ denote the quadratic twists of $C_q,J_q$ respectively over $\F_p$. Then

\begin{enumerate}

\item $$\rk J_q(\F_{p^2})[\ell] \sim \frac{q-1}{2m}\cdot \log_\ell(p^2),$$

\item $$1+o_m(1)\leq\frac{\max\left(\rk J_q(\F_p)[\ell],\rk J_q^-(\F_p)[\ell]\right)}{\frac{q-1}{2m}\cdot \log_\ell(p)} \leq 2+o_m(1).$$

\end{enumerate}

\end{thm}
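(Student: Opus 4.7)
The plan is to combine Lemma \ref{tate} with the eigenvalue description from Lemma \ref{eigencq} and the multiplicity estimates of Proposition \ref{adappears}. The hypotheses on $m$ in the theorem statement are exactly those needed to apply Lemma \ref{tate} with $q_0 = p^2$ (so that $q = q_0^m$), which reduces part (1) to computing the multiplicity $M$ of $(x-1)$ in the reduction mod $\ell$ of the characteristic polynomial of $F_{p^2}$ acting on $T_\ell(J_q)$.

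By Lemma \ref{eigencq} applied to $F_q = F_{p^2}^m$, the eigenvalues of $F_{p^2}$ on $T_\ell(J_q)$ all have the form $p\zeta$ for $\zeta \in \mu_{2m}$, with a primitive $d$-th root appearing with multiplicity $a_d$ (as defined in Proposition \ref{adappears}). A factor $x - p\zeta$ reduces to $x-1$ modulo $\ell$ iff $\bar\zeta = \bar p^{-1}$ in $\overline{\F}_\ell$. Writing $r$ for the multiplicative order of $p$ in $\F_\ell^{\times}$, the hypothesis $\ell - 1 \mid m$ gives $r \mid 2m$, and $\gcd(2m,\ell)=1$ makes the reduction map $\mu_{2m}(\overline{\Z}_\ell) \to \mu_{2m}(\overline{\F}_\ell)$ a bijection; therefore there is exactly one such $\zeta$, a primitive $r$-th root of unity. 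Hence $M = a_r$, and plugging into Proposition \ref{adappears} yields the asymptotic claimed in part (1).

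For part (2), the key observation is that $J_q$ and its quadratic twist $J_q^-$ share the same $\ell$-adic Tate module, but the Frobenius $F_p$ acts on one as $-F_p$ on the other. Since $\ell$ is odd, the factors $(x-1)$ and $(x+1)$ are coprime in $\F_\ell[x]$, so on the $F_p^2$-fixed subspace of $T_\ell(J_q) \otimes \F_\ell$ we obtain the decomposition
$$J_q(\F_{p^2})[\ell] \;=\; \ker(F_p - 1) \,\oplus\, \ker(F_p + 1) \;=\; J_q(\F_p)[\ell] \,\oplus\, J_q^{-}(\F_p)[\ell].$$
Taking $\F_\ell$-dimensions and using that $\max(a,b)$ lies in $[\tfrac12(a+b),\,a+b]$, part (2) follows directly from part (1). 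The main technical obstacle is the eigenvalue count in the middle paragraph: one must verify that exactly one element of $\mu_{2m}$ reduces to $\bar p^{-1}$, which is precisely where the hypotheses $\ell - 1 \mid m$ and $\gcd(2m,\ell)=1$ get used.
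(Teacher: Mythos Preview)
Your proof is correct and follows essentially the same route as the paper's: invoke Lemma~\ref{tate} with $q_0=p^2$, identify $M$ with $a_d$ for $d$ the multiplicative order of $p$ modulo~$\ell$, apply Proposition~\ref{adappears}, and then for part~(2) use the direct-sum decomposition $J_q(\F_{p^2})[\ell]\cong J_q(\F_p)[\ell]\oplus J_q^-(\F_p)[\ell]$ coming from the $\pm1$-eigenspaces of $F_p$. The only difference is that you spell out the identification $M=a_r$ (via the bijection $\mu_{2m}(\overline{\Z}_\ell)\to\mu_{2m}(\overline{\F}_\ell)$) and the eigenspace decomposition in more detail than the paper, which simply asserts both.
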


\begin{proof}

For the proof  of (1), note that by lemma \ref{tate} the $\ell$-torsion rank is exactly $a_d$ where $d$ is the order of $q_0^{\frac12}$ modulo $\ell$, which by lemma \ref{adappears}
satisfies $|a_d-\frac{q-1}{m}|\leq \frac{m-1}{m}(q^{\frac12}+1)\leq q^{\frac12}+1$. Since $m=\log_{p^2} q = o(q^{\frac 12})$ the result follows.
\bigskip

For the proof of (2), note that since $\ell$ is odd, we have the isomorphism $$J_q(\F_{p^2})[\ell] \cong J_q(\F_p)[\ell] \oplus J^-_q(\F_p)[\ell],$$ and so the result follows immediately
from (1).

\end{proof}

\end{document}